\documentclass{article} 
\usepackage[utf8]{inputenc}
\usepackage[T1]{fontenc}
\usepackage{lmodern}
\usepackage[a4paper]{geometry}
\usepackage{babel}
\usepackage{enumitem}
\usepackage{fullpage}
\usepackage{pifont}

\usepackage[usenames, dvipsnames]{xcolor}
\usepackage{tikz}
\usetikzlibrary{patterns}
\usepackage[justification=justified]{caption}
\usepackage{pgfplots}
\pgfplotsset{compat=1.15}
\usepackage{mathrsfs}
\usetikzlibrary{arrows}
\usepackage{amssymb,amsmath,mathtools,amsthm,dsfont}
\usepackage{stmaryrd}

\usepackage[all]{xy}


\newtheorem{theoreme}{Théorème}[section]
\newtheorem{theorem}[theoreme]{Theorem}
\newtheorem{prop-f}[theoreme]{Proposition}

\newtheorem{corollary}[theoreme]{Corollary}

\newtheorem{remk}[theoreme]{Remark}
\newtheorem{claim}[theoreme]{Claim}

\newcommand{\E}{\mathbb{E}}

\renewcommand{\P}{\mathbb{P}}

\newcommand{\R}{\mathbb{R}}

\newcommand{\Z}{\mathbb{Z}}
\renewcommand{\l}{\ell}

\renewcommand{\L}{\mathcal{L}}

\newcommand{\cA}{\mathcal{A}}

\newcommand{\cC}{\mathcal{C}}

\newcommand{\cE}{\mathcal{E}}
\newcommand{\cF}{\mathcal{F}}

\newcommand{\cP}{\mathcal{P}}

\newcommand{\fpm}{\vec{\pi}}
\newcommand{\fcP}{\vec{\cP}}

\newcommand{\fg}{\vec{\gamma}}
\newcommand{\fm}{\vec{\mu}}
\newcommand{\ft}{\vec{t}}

\newcommand{\ulm}{u^\Lambda}
\newcommand{\vlm}{v^\Lambda}
\renewcommand{\r}{t_{\min}}

\newcommand{\Lu}{\underline{L}}

\newcommand{\PP}{\pi^+}
\newcommand{\PPP}{\pi^{++}}

\renewcommand{\epsilon}{\varepsilon}
\renewcommand{\phi}{\varphi}

\definecolor{qqwuqq}{rgb}{0,0.39215686274509803,0}
\definecolor{ccqqqq}{rgb}{0.8,0,0}

\numberwithin{equation}{section}
\newcounter{numeroexo}

\makeindex
\setcounter{secnumdepth}{5}

\begin{document}
	
	\title{Strict inequality between the time constants of first-passage percolation and directed first-passage percolation}
	\author{Antonin Jacquet\footnote{Institut Denis Poisson, UMR-CNRS 7013, Université de Tours, antonin.jacquet@univ-tours.fr}}
	\date{}
	\maketitle
	\begin{abstract}
		In the models of first-passage percolation and directed first-passage percolation on $\Z^d$, we consider a family of i.i.d\ random variables indexed by the set of edges of the graph, called passage times. 
		For every vertex $x \in \Z^d$ with nonnegative coordinates, we denote by $t(0,x)$ the shortest passage time to go from $0$ to $x$ and by $\ft(0,x)$ the shortest passage time to go from $0$ to $x$ following a directed path. 
		Under some assumptions, it is known that for every $x \in \R^d$ with nonnegative coordinates, $t(0,\lfloor nx \rfloor)/n$ converges to a constant $\mu(x)$ and that $\ft(0,\lfloor nx \rfloor)/n$ converges to a constant $\fm(x)$. With these definitions, we immediately get that $\mu(x) \le \fm(x)$. 
		In this paper, we get the strict inequality $\mu(x) < \fm(x)$ as a consequence of a new exponential bound for the comparison of $t(0,x)$ and $\ft(0,x)$ when $\|x\|$ goes to $\infty$. This exponential bound is itself based on a lower bound on the number of edges of geodesics in first-passage percolation (where geodesics are paths with minimal passage time).
	\end{abstract}
	
	

	\section{Introduction and results}
	
	\subsection{The model of first-passage percolation.}\label{Sous-section modèle classique de ppp.}
	
	Let $d \ge 2$. We consider first-passage percolation on the usual undirected graph $\Z^d$. The edges are those connecting two vertices $x$ and $y$ such that $\|x-y\|_1=1$. We denote by $\cE$ the set of edges. 
	We consider a family $T=\{T(e) \, : \, e \in \cE\}$ of i.i.d.\ random variables taking values in $[0,\infty]$ and defined on a probability space $(\Omega,\cF,\P)$. The random variable $T(e)$ represents the passage time of the edge $e$. Their common distribution is denoted by $\L$, and the minimum of its support is denoted by $\r$.
	
	A finite path $\pi=(x_0,\dots,x_k)$ is a sequence of adjacent vertices of $\Z^d$, i.e.\ for all $i=0,\dots,k-1$, $\|x_{i+1}-x_i\|_1=1$. We say that $\pi$ is a path between $x_0$ and $x_k$. A path $\pi$ is a self-avoiding path if the vertices $x_0,\dots,x_k$ are all different.
	Sometimes we identify a path with the sequence of edges it visits, writing $\pi=(e_1,...,e_k)$ where for $i=1,\dots,k$, $e_i=\{x_{i-1},x_i\}$. 
	For two vertices $x$ and $y$, we denote by $\cP(x,y)$ the set of finite self-avoiding paths between $x$ and $y$.
	The passage time $T(\pi)$ of a path $\pi=(e_1,\dots,e_k)$ is defined as the sum of the variables $T(e_i)$ for $i=1,\dots,k$.
	
	Then, for two vertices $x$ and $y$, we define the geodesic time
	\begin{equation}
		t(x,y)= \inf_{\pi \in \cP(x,y)} T(\pi). \label{Définition geodesic time.}
	\end{equation}
	This defines a pseudometric on $\Z^d$ and this is a metric when the passage times only take positive values.
	A self-avoiding path $\gamma$ between $x$ and $y$ such that $T(\gamma)=t(x,y)$ is called a geodesic between $x$ and $y$.

	\paragraph*{Time constant in first-passage percolation.}
	
	The time constant describes the first-order of growth of the geodesic time. Assume that 
	\begin{equation}
		\E \min \left[T_1,\dots,T_{2d}\right] < \infty,\label{h: Hypothèse de moment.}
	\end{equation}
	where $T_1,\dots,T_{2d}$ are independent with distribution $\L$. Then, a subadditive argument gives that for every $x \in \R^d$, there exists a constant $\mu(x) \in [0,\infty)$ such that:
	\begin{equation}
		\lim\limits_{n\to\infty} \frac{t(0,nx)}{n} = \lim\limits_{n\to\infty} \frac{\E \left[t(0,nx)\right]}{n} = \mu (x) \text{ a.s.\, and in $L^1$.} \label{def: Définition de la constante de temps.}
	\end{equation}
	Note that in \eqref{def: Définition de la constante de temps.} and in the whole paper, for $x$ and $y$ in $\R^d$, we define $t(x,y)$ as $t(\lfloor x \rfloor,\lfloor y \rfloor)$ where $\lfloor x \rfloor$ is the unique vertex in $\Z^d$ such that $x \in \lfloor x \rfloor + [0,1)^d$ (similarly for $\lfloor y \rfloor$). 
	We refer to Theorem 2.18 in \cite{SaintFlourKesten} for this result which gives a first-order of growth of the geodesic time. For more details on the time constant and on first-passage percolation, we refer to \cite{50years}.
	
	\subsection{Euclidean length of geodesics in first-passage percolation}\label{Sous-section théorème 2.}
	
	The following theorem is the first result of this paper and is an extension of Theorem 2.5 in \cite{KRAS}. It gives a lower bound for the Euclidean length of geodesics in first-passage percolation. The assumptions on $\L$ are those of Theorem 1.5 in \cite{Jacquet2}. In particular, there is no moment assumption. Furthermore, the assumption on the weight of the minimum of the support of $\L$ (denoted by $\r$) is less restrictive than in Theorem 2.5 in \cite{KRAS}. Indeed we assume that $\L$ is useful, that is 
	\begin{equation}
		\begin{split}
			\L(\r) < p_c & \mbox{ when } \r=0, \\
			\L(\r) < \overrightarrow{p_c} & \mbox{ when } \r>0,
		\end{split}\label{eq: loi useful.}
	\end{equation}
	where  $p_c$ denotes the critical probability for Bernoulli bond percolation model on $\Z^d$ and $\overrightarrow{p_c}$ is the critical probability for oriented Bernoulli bond percolation on $\Z^d$ (we refer to \cite{Grimmett} for background on percolation and more precisely to Section 12.8 in \cite{Grimmett} for background on oriented Bernoulli bond percolation).
	
	This assumption guarantees that geodesics between any vertices exist almost surely (see for example Proposition 4.4 in \cite{50years}).
	
	
	For two vertices $x$ and $y$, when $\L(\infty)=0$, we define 
	\[\Lu(x,y) = \inf \{ |\gamma|_e \, : \, \text{$\gamma$ is a geodesic from $x$ to $y$}\},\]
	where for a finite path $\pi$, $|\pi|_e$ denotes the number of edges of $\pi$. In other words, $\Lu(x,y)$ is the minimal Euclidean length of a geodesic between $x$ and $y$.
	
	When $\L(\infty)>0$, there are vertices between which all paths have an infinite passage time. Assume that $\L([0,\infty)) > p_c$. Say that an edge $e$ is open if its passage time $T(e)$ is finite and closed otherwise. Thanks to this assumption, this percolation model is supercritical. We get that there exists a unique infinite component of open edges, which we denote by $\cC_\infty$. Then, we define the random set 
	\[\mathfrak{C} = \{(x,y) \in \Z^d \times \Z^d \, : \, \exists \text{ a path $\pi$ from $x$ to $y$ such that $T(\pi)<\infty$}\}.\]
	When $(x,y) \in \mathfrak{C}$, we can define 
	\[\Lu(x,y) = \inf \{ |\gamma|_e \, : \, \text{$\gamma$ is a geodesic from $x$ to $y$}\}.\]
	
	\begin{theorem}\label{thm: Théorème principal 2.}
		Assume that the support of $\L$ is included in $[0,\infty]$, that $\L$ is useful (i.e. that $\L$ satisfies \eqref{eq: loi useful.}) and that $\L([0,\infty)) > p_c$. 
		There exist deterministic constants $\alpha_1>0$, $\alpha_2>0$ and $\delta > 0$ such that for all $x \in \Z^d$, 
		\begin{equation}
			\P \left((0,x) \in \mathfrak{C} \text{ and } \Lu(0,x) \le (1+\delta) \|x\|_1 \right) \le \alpha_1 \mathrm{e}^{-\alpha_2 \|x\|_1}.\label{eq: equation du theoreme 2.}
		\end{equation}
	\end{theorem}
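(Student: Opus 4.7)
The plan is to combine a lower bound on the passage time of "short" paths (those with at most $(1+\delta)\|x\|_1$ edges, forced by subcritical percolation of cheap edges to have passage time bounded below) with an upper bound on $t(0,x)$ constructed from a supercritical percolation of finite-passage-time edges. On the intersection of both high-probability events, no path of length at most $(1+\delta)\|x\|_1$ can attain $t(0,x)$, so $\Lu(0,x)>(1+\delta)\|x\|_1$. This mirrors the strategy of Theorem~2.5 in \cite{KRAS}, but the moment assumption there must be replaced by a renormalization construction, and the weaker useful condition \eqref{eq: loi useful.} must be exploited more carefully. Using the useful condition, I first choose $\r_1>\r$ such that the \emph{cheap} edges — those with $T(e)\in[\r,\r_1]$ — form a subcritical Bernoulli percolation, in the unoriented sense if $\r=0$ and in the oriented sense if $\r>0$. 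Subcriticality yields $\lambda\in(0,1)$ such that, for any (oriented, when $\r>0$) self-avoiding path of length $n$ starting at a fixed vertex, the probability that all its edges are cheap is at most $\lambda^n$.

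For the lower bound on short paths, I consider any self-avoiding path $\pi$ from $0$ to $x$ with $|\pi|_e=N\le(1+\delta)\|x\|_1$. Such a $\pi$ has at most $\delta\|x\|_1/2$ backtrack edges in each coordinate, so large portions are directed when $\r>0$. A Peierls-type decomposition combined with the exponential decay above shows that $\pi$ contains at least $c_0 N$ \emph{expensive} edges (with $T(e)>\r_1$) except on an event of probability at most $\lambda^{c_0 N}$. A union bound over the $\le(2d)^N$ self-avoiding paths of each length $N\le(1+\delta)\|x\|_1$, with $\delta$ chosen small enough that $(1+\delta)\log(2d)+c_0\log\lambda<0$, yields: on an event of probability at least $1-C_1 \mathrm{e}^{-C_2\|x\|_1}$, every such path satisfies $T(\pi)\ge \r N+c_0(\r_1-\r)N\ge(\r+c_0(\r_1-\r))\|x\|_1$.

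For the upper bound on $t(0,x)$, I use the hypothesis $\L([0,\infty))>p_c$: the edges of finite passage time form a supercritical Bernoulli percolation with a unique infinite cluster. By an Antal–Pisztora-style renormalization at a scale $\Kpar$, and by restricting to "good" blocks in which a dense sub-cluster has typical passage time close to $\r$, I construct a self-avoiding path from $0$ to $x$ of passage time at most $K\|x\|_1$ with $K<\r+c_0(\r_1-\r)$, with probability at least $1-C'_1 \mathrm{e}^{-C'_2\|x\|_1}$. Combining this with the previous step, any candidate geodesic of length at most $(1+\delta)\|x\|_1$ would have passage time strictly above $t(0,x)$, a contradiction; therefore $\Lu(0,x)>(1+\delta)\|x\|_1$ with probability at least $1-\alpha_1 \mathrm{e}^{-\alpha_2\|x\|_1}$ on the event $(0,x)\in\mathfrak{C}$. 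The main obstacle is this last construction: achieving $K<\r+c_0(\r_1-\r)$ requires a careful joint optimization of the renormalization scale $\Kpar$, the threshold $\r_1$, and the fraction $c_0$ — in contrast to \cite{KRAS}, where moment assumptions allow direct use of the time constant $\mu$ as an upper bound on $t(0,x)/\|x\|_1$. This quantitative renormalization under the weaker hypotheses of the theorem is the technical heart of the proof.
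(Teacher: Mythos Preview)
Your route differs fundamentally from the paper's, and the step you flag as ``the technical heart'' is in fact a genuine gap rather than a deferred computation.

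The paper never compares an upper bound on $t(0,x)$ with a lower bound on the passage time of short paths. It uses the pattern machinery of \cite{Jacquet2}: one builds a finite box $\Lambda$ with endpoints $\ulm,\vlm$ and an event $\cA^\Lambda$ (positive probability, depending only on the edges in $\Lambda$) under which the unique optimal internal path from $\ulm$ to $\vlm$ makes a two-edge detour in the $\epsilon_2$-direction relative to the straight segment. Theorem~1.5 of \cite{Jacquet2}, which holds under exactly the present hypotheses and nothing more, then gives that any geodesic crosses this pattern at least $\delta\|x\|_1$ times except with exponentially small probability. Since each crossing contributes an $\epsilon_2$-step and a matching $-\epsilon_2$-step that cancel in the displacement, one obtains $|\gamma|_e\ge\|x\|_1+N^{\mathfrak P}(\gamma)\ge(1+\delta)\|x\|_1$ directly. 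No upper bound on $t(0,x)$ is ever invoked, and the absence of moment assumptions is inherited for free from \cite{Jacquet2}.

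In your argument, by contrast, everything hinges on obtaining $K<\r+c_0(\r_1-\r)$, and you give no mechanism for this. The right-hand side is forced close to $\r$: $\r_1$ must be small enough that $\L([\r,\r_1])$ stays below the relevant threshold, and $c_0$ is further squeezed by the union bound (note incidentally that the naive union bound over $(2d)^N$ self-avoiding paths requires $(2d)\,\L([\r,\r_1])<1$, which is strictly stronger than subcriticality, so already the lower-bound side needs a sharper input than what you wrote; and your estimate ``probability at most $\lambda^{c_0 N}$'' for having fewer than $c_0N$ expensive edges is not what a binomial tail gives). On the upper-bound side, an Antal--Pisztora path through the supercritical cluster of finite-weight edges has length $C\|x\|_1$ with some $C>1$ that your outline does not control, and every edge on it has weight at least $\r$; when $\r>0$ this already forces $K\ge C\r>\r$, and there is nothing in your sketch driving $C\to1$ or otherwise ensuring $C\r<\r+c_0(\r_1-\r)$. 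For a concrete obstruction, take $\L=\tfrac13\delta_1+\tfrac13\delta_2+\tfrac13\delta_\infty$ in $d=2$: all hypotheses hold, your lower bound gives at best $L=1+c_0$ with $c_0$ small, yet any finite-weight path of length $M$ has passage time at least $M\ge\|x\|_1$, and any path one can build through the barely-supercritical $\{1,2\}$-cluster has average edge weight near $3/2$. In \cite{KRAS} the moment hypothesis resolves exactly this tension by letting concentration of $t(0,x)/\|x\|_1$ around $\mu$ play the role of $K$; that device is unavailable here, and the pattern argument is precisely what replaces it.
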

	
	The proof of Theorem \ref{thm: Théorème principal 2.} is given in Section \ref{Sect: Section preuve du théorème 2.}. We state the following immediate corollary. 
	
	\begin{corollary}
		Assume that the support of $\L$ is included in $[0,\infty]$, that $\L$ is useful (i.e. that $\L$ satisfies \eqref{eq: loi useful.}) and that $\L([0,\infty)) > p_c$.
		Then, there exists a deterministic constant $\delta>0$ and of an almost surely finite random constant $K$ such that $\Lu(0,x) \ge (1+\delta) \|x\|_1$ whenever $x \in \Z^d$ satisfies $\|x\|_1 \ge K$.
	\end{corollary}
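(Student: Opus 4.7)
The corollary follows from Theorem \ref{thm: Théorème principal 2.} by a direct Borel-Cantelli argument. I would start by taking $\delta > 0$ and $\alpha_1, \alpha_2 > 0$ as provided by that theorem, and for each $x \in \Z^d$ define the bad event
\[A_x = \{(0,x) \in \mathfrak{C}\} \cap \{\Lu(0,x) \le (1+\delta) \|x\|_1\}.\]
Theorem \ref{thm: Théorème principal 2.} then gives the uniform bound $\P(A_x) \le \alpha_1 \mathrm{e}^{-\alpha_2 \|x\|_1}$.

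Next I would sum these probabilities over $x \in \Z^d$. Since the number of vertices with $\|x\|_1 = n$ grows only polynomially in $n$ (bounded by some $C_d (n+1)^{d-1}$), one gets
\[\sum_{x \in \Z^d} \P(A_x) \le \alpha_1 C_d \sum_{n \ge 0} (n+1)^{d-1} \mathrm{e}^{-\alpha_2 n} < \infty.\]
The Borel-Cantelli lemma then ensures that almost surely only finitely many of the events $A_x$ occur. I would define
\[K = 1 + \max \{\|x\|_1 \, : \, x \in \Z^d \text{ such that } A_x \text{ occurs}\},\]
with the convention $\max \emptyset = 0$. Then $K$ is an almost surely finite random constant, and for every $x \in \Z^d$ with $\|x\|_1 \ge K$ the event $A_x$ fails, so either $(0,x) \notin \mathfrak{C}$ (in which case $\Lu(0,x) = +\infty$ by convention) or $(0,x) \in \mathfrak{C}$ and $\Lu(0,x) > (1+\delta) \|x\|_1$. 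In either case $\Lu(0,x) \ge (1+\delta) \|x\|_1$.

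There is no substantive obstacle: the entire content is carried by the exponential decay in Theorem \ref{thm: Théorème principal 2.} together with the polynomial growth of spheres in $\Z^d$. The only minor subtlety is fixing the convention $\Lu(0,x) = +\infty$ when $(0,x) \notin \mathfrak{C}$, which is needed only in the supercritical case $\L(\infty) > 0$ when $0$ or $x$ lies outside the infinite open cluster $\cC_\infty$.
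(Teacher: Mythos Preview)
Your argument is correct and is precisely the standard Borel--Cantelli derivation that the paper intends when it calls this an ``immediate corollary'' of Theorem~\ref{thm: Théorème principal 2.}; the paper does not spell out a proof beyond that remark. Your handling of the convention $\Lu(0,x)=+\infty$ when $(0,x)\notin\mathfrak{C}$ is the right way to make the statement literally true, since in the paper $\Lu$ is only defined on $\mathfrak{C}$.
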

	
	This corollary is proven, under slightly more restrictive assumptions in \cite{KRAS} (see Theorem 2.5).
	
	\subsection{Directed first-passage percolation}
	
	Denote by $\{\epsilon_1,\dots,\epsilon_d\}$ the vectors of the canonical basis. A directed path $\fpm =(x_0,\dots,x_k)$ is defined as a path such that for all $i=0,\dots,k-1$, there exists $j \in \{1,\dots,d\}$ such that $x_{i+1}=x_i+\epsilon_j$.
	For $x=(x^1,\dots,x^d)\in\R^d$ and $y=(y^1,\dots,y^d)\in\R^d$, we say that $x \le y$ (resp. $x \ge y$) if for all $j \in \{1,\dots,d\}$, $x^j \le y^j$ (resp. $x^j \ge y^j$).
	For two vertices $x$ and $y$ such that $x \le y$, we denote by $\fcP(x,y)$ the set of directed paths between $x$ and $y$. 
	
	Then, for two vertices $x$ and $y$ such that $x \le y$, we also define the directed geodesic time
	\begin{equation}
		\ft(x,y)= \inf_{\fpm \in \fcP(x,y)} T(\fpm). \label{Définition oriented geodesic time.}
	\end{equation}
	A directed path $\fg$ between $x$ and $y$ such that $T(\fg)=\ft(x,y)$ is called a directed geodesic between $x$ and $y$. Directed geodesics between any vertices $x$ and $y$ exist almost surely since there is a finite number of directed paths between $x$ and $y$.
	
	Assume now that $\E [ T(e) ] < \infty$ where $T(e)$ is a random variable with distribution $\L$. The same subadditive argument as the one used to get \eqref{def: Définition de la constante de temps.} allows us to describe the first-order of growth of the directed geodesic time. We get for every $x \in \R^d$ such that $x \ge 0$ the existence of a constant $\fm(x) \in [0,\infty)$ (called here the directed time constant) such that 
	\begin{equation}
		\lim\limits_{n\to\infty} \frac{\ft(0,nx)}{n} = \lim\limits_{n\to\infty} \frac{\E \left[\ft(0,nx)\right]}{n} = \fm (x) \text{ a.s.\, and in $L^1$.} \label{def: Définition de la constante de temps orientée.}
	\end{equation}
	We refer to \cite{MartinDirectedFPP} and to \cite{YuZhang} for the definition and for results on this directed time constant.
	
	\paragraph*{Comparison between the time constant and the directed time constant.}
	
	The second main result of this paper is the exponential bound for the comparison of the geodesic time and the directed geodesic time. It is given below in Theorem \ref{thm: Théorème principal bis 2.}, and the comparison between the two time constants in Theorem \ref{thm: Théorème principal.} follows from this result.
	The proof of Theorem \ref{thm: Théorème principal bis 2.} is based on the lower bound of the Euclidean length of geodesics given by Theorem \ref{thm: Théorème principal 2.}.
	
	Recall that we denote by  $\r$ the minimum of the support of $\L$ and that $\overrightarrow{p_c}$ is the critical probability for oriented Bernoulli bond percolation on $\Z^d$.
	We assume that
	\begin{equation}
		\L(\r) < \overrightarrow{p_c}.\label{h: hypothèse pour que la loi soit utile.}
	\end{equation}

	\begin{theorem}\label{thm: Théorème principal bis 2.}
		Assume that the support of $\L$ is included in $[0,\infty)$ and that \eqref{h: hypothèse pour que la loi soit utile.} holds. 
		Then, there exist constants $\delta>0$, $\alpha_1>0$ and $\alpha_2>0$ such that for all $x \in \R^d$ such that $x \ge 0$, 
		\begin{equation}
			\P \left( t(0,x) \le \ft(0,x) - \delta \|\lfloor x \rfloor \|_1 \right) \ge 1 - \alpha_1 \mathrm{e}^{-\alpha_2 \|x\|_1}.\label{eq: equation theoreme principal bis 2.}
		\end{equation}
	\end{theorem}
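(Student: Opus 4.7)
My plan is to construct, with probability at least $1-\alpha_1\mathrm{e}^{-\alpha_2\|x\|_1}$, an undirected path from $0$ to $x$ whose passage time undercuts $\ft(0,x)$ by $\delta\|x\|_1$. The strategy is a surgery-plus-concentration argument whose local input is Theorem~\ref{thm: Théorème principal 2.}.

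First I would prove a \emph{local shortcut lemma} at a fixed scale. Pick a large integer $L$ and a box $B\subset\Z^d$ of side $L$ with two of its corners $a,b$ satisfying $b-a=L\epsilon_j$, so $\|b-a\|_1=L$. Applying Theorem~\ref{thm: Théorème principal 2.} at scale $L$, outside an event of probability $\alpha_1\mathrm{e}^{-\alpha_2 L}$ every undirected geodesic between $a$ and $b$ has at least $(1+\delta_0)L$ edges and is thus not directed, so the nonnegative random variable $X:=\ft(a,b)-t(a,b)$ is strictly positive. For $L$ large one has $\P(X>0)\ge 1/2$, hence there exist $\eta=\eta(L)>0$ and $p=p(L)>0$ with $\P(X>\eta)\ge p$; call this event $G_B$.

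Next I would tile a narrow tube around the segment $[0,x]$ by $K\asymp \|x\|_1/L$ pairwise disjoint boxes $B_1,\ldots,B_K$, chosen so the events $G_{B_1},\ldots,G_{B_K}$ are i.i.d.\ Bernoulli$(p)$. Chernoff--Hoeffding then gives that at least $pK/2$ of them occur outside an event of exponentially small probability in $\|x\|_1$. On this high-probability event I would splice: take a directed geodesic $\fg$ from $0$ to $x$, track its passage through the boxes, and replace the portion of $\fg$ inside each good box by the local undirected shortcut (with patching at the boundary where needed). The resulting undirected path $\sigma$ satisfies $T(\sigma)\le \ft(0,x)-\eta pK/2$, which combined with $t(0,x)\le T(\sigma)$ yields \eqref{eq: equation theoreme principal bis 2.} with $\delta=\eta p/(2L)$.

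The hard part, as I see it, is the splicing. The directed geodesic $\fg$ is random and may enter or exit each $B_k$ at points different from the prescribed corners $a_k,b_k$; the extra patching this forces must not destroy the $\eta$ saving per box. This typically requires either a careful localized Van den Berg--Kesten-type surgery, or a decoupling that replaces $\fg$ by a deterministic staircase reference path with the additional cost controlled via subadditivity and a separate uniform upper bound on short-range directed passage times. A secondary subtlety is the case $\r=0$: Theorem~\ref{thm: Théorème principal 2.} then requires $\L(0)<p_c$, which is strictly stronger than hypothesis \eqref{h: hypothèse pour que la loi soit utile.}, so an additional truncation or coupling argument may be needed to reduce to the regime covered by Theorem~\ref{thm: Théorème principal 2.}.
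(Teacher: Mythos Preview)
Your proposal is a sketch with two gaps you yourself flag, and neither is cosmetic. The splicing step is the essential one and it does not close: if you replace the directed geodesic $\fg$ by a deterministic staircase $\pi_0$ in order to have independent boxes, then after surgery you bound $t(0,x)$ by $T(\pi_0)-\eta pK/2$, but $T(\pi_0)-\ft(0,x)$ is itself of order $\|x\|_1$ with no reason to be smaller than your saving; if instead you keep $\fg$ and try to reroute inside the good boxes, the entry/exit points of $\fg$ in $B_k$ are random and need not be the corners $a_k,b_k$ between which your shortcut was proved, so the saving $\eta$ is not available without an additional uniform lower bound on $\ft(u,v)-t(u,v)$ for \emph{all} boundary pairs $(u,v)$, which Theorem~\ref{thm: Théorème principal 2.} does not provide. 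The second gap (that for $\r=0$ Theorem~\ref{thm: Théorème principal 2.} needs $\L(0)<p_c$ while you only assume $\L(0)<\overrightarrow{p_c}$) is also real and your proposal offers no mechanism to bridge it.

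The paper avoids both difficulties with a single device: shift the environment by a constant, $T_\Delta(e)=T(e)+\Delta$ with $\Delta>0$. This makes $\r^\Delta>0$, so the usefulness condition for Theorem~\ref{thm: Théorème principal 2.} becomes $\L^\Delta(\r^\Delta)=\L(\r)<\overrightarrow{p_c}$, exactly hypothesis~\eqref{h: hypothèse pour que la loi soit utile.}; this resolves your second gap outright. Let $\gamma_\Delta$ be a geodesic from $0$ to $\lfloor x\rfloor$ in $T_\Delta$. Theorem~\ref{thm: Théorème principal 2.} applied to $T_\Delta$ gives $|\gamma_\Delta|_e\ge(1+\delta)\|\lfloor x\rfloor\|_1$ with the desired exponential probability. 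On that event, since directed paths have exactly $\|\lfloor x\rfloor\|_1$ edges and directed geodesics are the same in $T$ and $T_\Delta$,
\[
t(0,x)\le T(\gamma_\Delta)=T_\Delta(\gamma_\Delta)-\Delta|\gamma_\Delta|_e\le T_\Delta(\fg)-\Delta(1+\delta)\|\lfloor x\rfloor\|_1=\ft(0,x)-\Delta\delta\|\lfloor x\rfloor\|_1.
\]
No tiling, no surgery, no concentration: the edge-length excess is converted directly into a time gap by the shift, and the single path $\gamma_\Delta$ serves as the global shortcut.
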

	
	\begin{theorem}\label{thm: Théorème principal.}
		Assume that the support of $\L$ is included in $[0,\infty)$, that \eqref{h: hypothèse pour que la loi soit utile.} holds and that $\E[T(e)] < \infty$ where $T(e)$ is a random variable with distribution $\L$. 
		Then, for every $x \in \R^d$ such that $x \ge 0$ and $x \ne 0$,
		\begin{equation}
			\mu(x) < \fm(x),\label{eq: équation théorème principal.}
		\end{equation}
		where $\mu(x)$ and $\fm(x)$ are defined at \eqref{def: Définition de la constante de temps.} and \eqref{def: Définition de la constante de temps orientée.}.
	\end{theorem}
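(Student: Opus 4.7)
The plan is to deduce the strict inequality directly from the exponential estimate in Theorem \ref{thm: Théorème principal bis 2.}, using the $L^1$ convergence for $t(0,nx)/n$ and $\ft(0,nx)/n$ guaranteed by \eqref{def: Définition de la constante de temps.} and \eqref{def: Définition de la constante de temps orientée.} under the moment hypothesis $\E[T(e)]<\infty$. In essence, this theorem is a corollary of Theorem \ref{thm: Théorème principal bis 2.}: all the work is in the exponential bound, and here one only needs to integrate it and pass to the limit.

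First I fix $x \in \R^d$ with $x \ge 0$ and $x \ne 0$, and apply Theorem \ref{thm: Théorème principal bis 2.} to the points $nx$ for $n \ge 1$. Writing $B_n$ for the event $\{t(0,nx) \le \ft(0,nx) - \delta \|\lfloor nx \rfloor \|_1\}$, the theorem gives
\begin{equation*}
\P(B_n) \ge 1 - \alpha_1 \mathrm{e}^{-\alpha_2 n \|x\|_1}.
\end{equation*}
Since every directed path is a path, $\ft(0,nx) \ge t(0,nx)$ deterministically, so the random variable $Z_n := \ft(0,nx) - t(0,nx)$ is nonnegative; on $B_n$ it is bounded below by $\delta \|\lfloor nx \rfloor \|_1$. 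Taking expectations,
\begin{equation*}
\E[Z_n] \ge \E[Z_n \1_{B_n}] \ge \delta \|\lfloor nx \rfloor \|_1 \, \P(B_n).
\end{equation*}

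Next I would divide by $n$ and let $n \to \infty$. Under the assumption $\E[T(e)] < \infty$, both convergences \eqref{def: Définition de la constante de temps.} and \eqref{def: Définition de la constante de temps orientée.} hold in $L^1$, so $\E[t(0,nx)]/n \to \mu(x)$ and $\E[\ft(0,nx)]/n \to \fm(x)$, and hence $\E[Z_n]/n \to \fm(x) - \mu(x)$. Since $\|\lfloor nx \rfloor\|_1 / n \to \|x\|_1$ and $\P(B_n) \to 1$ (the latter because $\|x\|_1 > 0$), passing to the limit in the displayed inequality yields
\begin{equation*}
\fm(x) - \mu(x) \ge \delta \|x\|_1 > 0,
\end{equation*}
which is exactly \eqref{eq: équation théorème principal.}.

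The only verifications are standard: the $L^1$ convergence of $t(0,nx)/n$ in \eqref{def: Définition de la constante de temps.} requires $\E\min[T_1,\dots,T_{2d}] < \infty$, which is implied by $\E[T(e)] < \infty$, and the $L^1$ convergence of $\ft(0,nx)/n$ follows from the same moment assumption via the standard subadditive argument. There is no genuine obstacle at this stage; the entire difficulty of the strict inequality has been absorbed into the exponential bound of Theorem \ref{thm: Théorème principal bis 2.}, which in turn rests on the lower bound for the Euclidean length of geodesics provided by Theorem \ref{thm: Théorème principal 2.}.
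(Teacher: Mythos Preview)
Your proposal is correct and follows essentially the same route as the paper: both deduce the strict inequality from Theorem~\ref{thm: Théorème principal bis 2.} by passing to expectations, dividing by $n$, and invoking the $L^1$ convergences \eqref{def: Définition de la constante de temps.} and \eqref{def: Définition de la constante de temps orientée.}. Your argument is in fact more explicit than the paper's at the step where the probability bound is converted into the expectation inequality $\E[\ft(0,nx)] - \E[t(0,nx)] \ge \delta \|\lfloor nx \rfloor\|_1 \,\P(B_n)$, which the paper states without justification.
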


	The proofs of Theorem \ref{thm: Théorème principal bis 2.} and \ref{thm: Théorème principal.} are given in Section \ref{Sect: Section proof.}.
	Note that since for all $x$ and $y$ in $\R^d$ such that $x \le y$, $\fcP(x,y) \subset \cP(x,y)$, we always have $t(x,y) \le \ft(x,y)$. It gives that for all $x \in \R^d$ such that $x \ge 0$, \[\mu(x) \le \fm(x),\] 
	and thus the result of Theorem \ref{thm: Théorème principal.} is on the strict inequality. 
	
	\begin{remk}\label{rem: lien avec l'article de KRAS.}
		When we make the slightly stronger assumption that the distribution $\L$ is useful (recall that the definition of a useful distribution is given at \eqref{eq: loi useful.}), Theorem \ref{thm: Théorème principal.}, with a weaker moment assumption\footnote{In \cite{KRAS}, the assumption is that $\displaystyle \E\left[(\min\{t_1,\dots,t_{2d}\})^d\right]<\infty$ where $t_1,\dots,t_{2d}$ are i.i.d.\ random variables with distribution $\L$.}, also follows from the results of Section 2.3 of the article of Krishnan, Rassoul-Agha and Seppäläinen \cite{KRAS}.  
		More precisely, using the notations of \cite{KRAS}, we get by Theorem 2.9 in \cite{KRAS} that for every $x \in \R^d$ such that $x \ge 0$ and $x \ne 0$, for each $\alpha>1$,
		\begin{equation}
			\mu(x) \le \alpha g^o\left(\frac{x}{\alpha}\right) \le \fm(x).\label{eq: equation remarque lien avec l'article de KRAS.}
		\end{equation}
		 From Theorem 2.11 in \cite{KRAS}, we obtain that the first inequality  in \eqref{eq: equation remarque lien avec l'article de KRAS.} is strict if $\alpha$ is close enough to $1$, which gives \eqref{eq: équation théorème principal.}.
	\end{remk}

	\begin{remk}
		Since Theorem \ref{thm: Théorème principal.} is an immediate corollary of Theorem \ref{thm: Théorème principal bis 2.}, one could improve the moment assumption in Theorem \ref{thm: Théorème principal.}. Here, we assume that $\E[T(e)] < \infty$ (where $T(e)$ is a random variable with distribution $\L$) for convenience for the definition of $\fm(x)$. 
	\end{remk}
	
	\paragraph*{Comments on the positivity of these constants.}
	
	In \cite{YuZhang}, Zhang proves results on the positivity of the directed time constant (defined here at \eqref{def: Définition de la constante de temps orientée.}) in dimension $2$.
	In particular, it is proven, when the passage times are nonnegative and when $\L$ has a finite first moment, that for all $x \in \R^d$ such that $x \ge 0$ and $x \ne 0$, $\fm(x)>0$ when $\L(0) < \overrightarrow{p_c}$. It is known in this setting that $\mu(x)=0$ for all $x \in \R^d$ if $\L(0) \ge p_c$, and that $\mu$ is a norm if $\L(0)<p_c$ (where $p_c$ denotes the critical probability for Bernoulli bond percolation model on $\Z^d$). Since Theorem \ref{thm: Théorème principal.} holds even if $\L(0) \in [p_c,\overrightarrow{p_c})$, it provides an alternative proof of the strict positivity of $\fm(x)$ for all $x \in \R^d$ such that $x \ge 0$ and $x \ne 0$ when $\L(0) < \overrightarrow{p_c}$ in any dimension $d$. 
	
	\subsection{Ideas of proofs}
	
	The proof of Theorem \ref{thm: Théorème principal 2.} is based on the notion of patterns developed in \cite{Jacquet2}. The idea is to consider a pattern in which the optimal path for the passage time between its endpoints is not optimal in terms of the number of edges. Then, using Theorem 1.5 in \cite{Jacquet2}, we get that every geodesic crosses a linear number of this pattern, which gives the desired lower bound in \eqref{eq: equation du theoreme 2.}.
	
	Theorem \ref{thm: Théorème principal.} is an easy consequence of Theorem \ref{thm: Théorème principal bis 2.}. The idea of the proof of Theorem \ref{thm: Théorème principal bis 2.} is to use the lower bound of the Euclidean length of geodesics in a shifted environment, that is an environment in which we add the same constant at the passage time of each edge of $\Z^d$. Since a directed geodesic between two vertices $x$ and $y$ such that $x \le y$ has exactly $\|y-x\|_1$ edges, we get a lower bound on the difference between the Euclidean length of a geodesic and of a directed geodesic.
	Then, we use the fact that, as in every environment, the geodesic time is lower than or equal to the directed geodesic time in the shifted environment. Finally, we get the difference between the geodesic time and the directed geodesic time desired in \eqref{eq: equation theoreme principal bis 2.} in the initial environment using the three following tools:
	\begin{enumerate}[label=(\roman*)]
		\item we have a lower bound on the difference of the number of edges between a geodesic and a directed geodesic in the shifted environment,
		\item a path is a directed geodesic in the shifted environment if and only if it is a directed geodesic in the initial one,
		\item the difference of the passage time of a path between the initial environment and the shifted one is proportional to its number of edges.
	\end{enumerate} 
	
	\paragraph*{Comments on the assumptions of Theorem \ref{thm: Théorème principal bis 2.} and Theorem \ref{thm: Théorème principal.}.}
	
	Note that in Theorem \ref{thm: Théorème principal 2.}, it is assumed that the support of $\L$ is included in $[0,\infty]$ and that $\L$ is useful (i.e.\ $\L$ satisfies \eqref{eq: loi useful.}).
	But since this theorem is used in a shifted environment in the proof of Theorem \ref{thm: Théorème principal bis 2.}, it allows us to consider in Theorem \ref{thm: Théorème principal bis 2.} and Theorem \ref{thm: Théorème principal.} distributions such that $\L(\r) < \overrightarrow{p_c}$ instead of $\L(\r) < p_c$.
	
	\section{Proofs of Theorem \ref{thm: Théorème principal bis 2.} and Theorem \ref{thm: Théorème principal.}}\label{Sect: Section proof.}
	
	\begin{proof}[Proof of Theorem \ref{thm: Théorème principal bis 2.} using Theorem \ref{thm: Théorème principal 2.}]
	
	Assume that $\L$ satisfies the assumptions of Theorem \ref{thm: Théorème principal bis 2.}.
	Fix $\Delta > 0$. 
	
	For each environment  $T=\{T(e) \, : \, e \in \cE\}$ of independent random variables with distribution $\L$, we define the environment $T_\Delta$ as the environment in which for every $e \in \Z^d$, 
	\begin{equation}
		T_\Delta(e) = T(e) + \Delta.\label{eq: Définition de l'environnement shifté.}
	\end{equation}
	Recall that for every $x \in \R^d$, we denote by $\lfloor x \rfloor$ the unique vertex in $\Z^d$ such that $x \in \lfloor x \rfloor + [0,1)^d$.
	For every $x \in \R^d$ such that $x \ge 0$, we denote by $\gamma_\Delta(x)$ the first geodesic in any fixed deterministic order from $0$ to $\lfloor x \rfloor$ in the shifted environment $T_\Delta$, and by $|\gamma_\Delta(x)|_e$ the number of edges of $\gamma_\Delta(x)$. The environment $T_\Delta$ satisfies the assumptions guaranteeing the existence of geodesics between any vertices almost surely.  
	Then, we denote by $\fg_\Delta(x)$ the first directed geodesic in any deterministic order from $0$ to $\lfloor x \rfloor$ in the shifted environment $T_\Delta$. 
	
	Denote by $\L^\Delta$ the distribution of $T_\Delta(e)$. 
	Using the assumptions on $\L$, we have that the support of $\L^\Delta$ is included in $[0,\infty)$ and that $\L^\Delta(\r^\Delta) < \overrightarrow{p_c}$ where $\r^\Delta>0$. Thus, $\L^\Delta$ is useful and satisfies the assumptions of Theorem \ref{thm: Théorème principal 2.}. By Theorem \ref{thm: Théorème principal 2.}, we get constants $\alpha_1>0$, $\alpha_2>0$ and $\delta > 0$ such that for every $x \in \R^d$, 
	\begin{equation}
		\P \left( |\gamma_\Delta(x)|_e \ge (1+\delta)\|\lfloor x \rfloor\|_1 \right) \ge 1 - \alpha_1 \mathrm{e}^{-\alpha_2 \| x \|_1}.\label{eq: équation minoration proba d'avoir suffisamment d'arêtes.}
	\end{equation}

	\vspace{\baselineskip}
	
	Now, let $x \in \R^d$ such that $x \ge 0$. 
	Assume that $|\gamma_\Delta(x)|_e \ge (1+\delta)\|\lfloor x \rfloor\|_1$.
	Then we have the following sequence of inequalities, whose justifications are given just below:

	\begin{align}
		t(0,x) & \le T(\gamma_\Delta(x)) \label{eq: numéro 1 dans le align.} \\
		& = T_\Delta(\gamma_\Delta(x)) - \Delta |\gamma_\Delta(x)|_e \label{eq: numéro 2 dans le align.} \\
		& \le T_\Delta(\gamma_\Delta(x)) - \Delta (1+\delta) \|\lfloor x \rfloor\|_1 \nonumber \text{ using the assumption $|\gamma_\Delta(x)|_e \ge (1+\delta)\|\lfloor x \rfloor\|_1$}, \\
		& \le T_\Delta(\fg_\Delta(x)) - \Delta (1+\delta) \|\lfloor x \rfloor\|_1 \label{eq: numéro 3 dans le align.} \\
		& \le \underbrace{T_\Delta(\fg_\Delta(x)) - \Delta |\fg_\Delta(x)|_e}_{=T(\fg_\Delta(x))} - \Delta \delta \|\lfloor x \rfloor\|_1 \label{eq: numéro 4 dans le align.} \\
		& = \ft(0,nx) - \Delta \delta \|\lfloor x \rfloor\|_1. \label{eq: numéro 5 dans le align.}
	\end{align}	

	The inequality \eqref{eq: numéro 1 dans le align.} comes from the fact that $\gamma_\Delta(x) \in \cP(0,\lfloor x \rfloor)$ and from the definition of the geodesic time given at \eqref{Définition geodesic time.}. For \eqref{eq: numéro 2 dans le align.}, we use the definition of the shifted environment given at \eqref{eq: Définition de l'environnement shifté.}. Indeed, with this definition, for every path $\pi$, we have 
	\begin{equation}
		T_\Delta (\pi) = T(\pi) + \Delta |\pi|_e.\label{eq: égalité temps dans l'environnement shifté et temps dans l'environnement normal.}
	\end{equation}
	We get \eqref{eq: numéro 3 dans le align.} using that, since $\fcP(0,\lfloor x \rfloor) \subset \cP (0,\lfloor x \rfloor)$, a geodesic from $0$ to $\lfloor x \rfloor$ has a lower passage time than a directed geodesic from $0$ to $\lfloor x \rfloor$. For \eqref{eq: numéro 4 dans le align.}, we use the fact that every path $\fcP(0,\lfloor x \rfloor)$ has exactly $\|\lfloor x \rfloor\|_1$ edges and thus that $|\fg_\Delta(nx)|_e = \|\lfloor x \rfloor\|_1$. Then, we use again \eqref{eq: égalité temps dans l'environnement shifté et temps dans l'environnement normal.}. Finally, using again these arguments, a path is a directed geodesic from $0$ to $\lfloor x \rfloor$ in the environment $T$ if and only if it is a directed geodesic from $0$ to $\lfloor x \rfloor$ in the environment $T_\Delta$. Hence $T(\fg_\Delta(x)) = \ft(0,x)$. 
	
	\vspace{\baselineskip}
	
	From the sequence of inequalities above, we get
	\begin{equation}
		\left\{ |\gamma_\Delta(x)|_e \ge (1+\delta)\|\lfloor x \rfloor\|_1 \right\} \subset \left\{ t(0,x) \le \ft(0,nx) - \Delta \delta \|\lfloor x \rfloor\|_1 \right\},
	\end{equation}
	which gives \eqref{eq: equation theoreme principal bis 2.} using \eqref{eq: équation minoration proba d'avoir suffisamment d'arêtes.}.
	\end{proof}

	\begin{proof}[Proof of Theorem \ref{thm: Théorème principal.}]
	
	Assume that $\L$ satisfies the assumptions of Theorem \ref{thm: Théorème principal.}. In particular, $\L$ satisfies the assumptions of Theorem \ref{thm: Théorème principal bis 2.}.
	Fix $x\in\R^d$ such that $x \ge 0$ and $x \ne 0$. Using that $\E[T(e)] < \infty$, where $T(e)$ is a random variable with distribution $\L$, and using Theorem \ref{thm: Théorème principal bis 2.}, we get the existence of a constant $\delta'>0$ such that for every $n$ sufficiently large, 
	\begin{equation}
		\E \left[ t(0,nx) \right] \le \E \left[ \ft(0,nx) \right] - \delta' \| \lfloor nx \rfloor \|_1.\label{eq: équation avec les espérances.}
	\end{equation}
	Dividing by $n$ in the inequality \eqref{eq: équation avec les espérances.}, we get
	\[\frac{\E \left[ t(0,nx) \right]}{n} \le \frac{\E \left[ \ft(0,nx) \right]}{n} - \delta' \left(\|x\|_1 - \frac{d}{n}\right).\]
	Finally, taking the limit in each side, we obtain
	\[\mu(x) = \lim\limits_{n \to \infty} \frac{\E \left[ t(0,nx) \right]}{n} < \lim\limits_{n \to \infty} \frac{\E \left[ \ft(0,nx) \right]}{n} = \fm(x),\]
	which concludes the proof of Theorem \ref{thm: Théorème principal.}.
	\end{proof}

	\section{Proof of Theorem \ref{thm: Théorème principal 2.}}\label{Sect: Section preuve du théorème 2.}
	
	\begin{proof}[Proof of Theorem \ref{thm: Théorème principal 2.}]
		This proof is based on the notion of patterns defined in \cite{Jacq} and follows the ideas of the proof of Section 4.2 in \cite{Jacq}. 
		Assume that $\L$ satisfies the assumptions of Theorem \ref{thm: Théorème principal 2.}.
		
		\begin{itemize}
			\item Assume first that $\L$ has at least two finite points in its support ant let $0 \le a < b < \infty$ two points in the support of $\L$. The case where there exists $a' \in \R$ such that $\L(a') + \L(\infty) = 1$ is dealt with at the end of the proof. When $\L(\infty)=0$, the existence of at least two different finite points in the support of $\L$ is guaranteed by the fact that $\L$ is useful.
			Fix 
			\begin{equation}
				\l > \frac{2a}{b-a}.\label{eq: on fixe l.}
			\end{equation}
			We define the subset $\displaystyle \Lambda = \{0,\dots,\l\} \times \{0,1\} \times \prod_{i=3}^d \{0\}$, and the vertices $\ulm = (0,\dots,0)$ and $\vlm = (\l,0,\dots,0)$.
			We denote by $\cE_\Lambda$ the set of edges linking vertices of $\Lambda$.
			We define the path $\PP$ as the path going from $\ulm$ to $\vlm$ by $\l$ steps in the direction $\epsilon_1$ and the path $\PPP$ as the path going from $\ulm$ to $\ulm + \epsilon_2$ by one step in the direction $\epsilon_2$, then to $\vlm + \epsilon_2$ by $\l$ steps in the direction $\epsilon_1$ and then to $\vlm$ by one step in the direction $\epsilon_2$. 
			
			For a deterministic family $(t_e)_{e \in \cE_\Lambda}$ of passage times on the edges of $\Lambda$ and for a path $\pi$, we denote $\displaystyle \sum_{e\in\pi} t_e$ by $T(\pi)$. For all $\delta \ge 0$, we consider the set $G(\delta)$ of families $(t_e)_{e \in \cE_\Lambda}$ of passage times on the edges of $\Lambda$ which satisfy the following two conditions:
			\begin{itemize}
				\item for all $e \in \PPP$, $t_e \in [a-\delta,a+\delta]$,
				\item for all $e \in \Lambda \setminus \PPP$, $t_e \in [b-\delta,b+\delta]$.
			\end{itemize}
			Then, consider the set $H$ of families $(t_e)_{e \in \cE_\Lambda}$ such that $\PPP$ is the unique optimal path from $\ulm$ to $\vlm$ among the paths entirely contained in $\Lambda$. 
			
			\begin{claim}\label{c: claim 1.}
				There exists $\delta>0$ such that $G(\delta) \subset H$.
			\end{claim}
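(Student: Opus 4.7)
My plan is to reduce the claim to a strict inequality at the idealized weights $\delta = 0$ and then upgrade it to small $\delta > 0$ by a Lipschitz perturbation argument on finitely many paths.

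I would first consider the configuration with $\delta = 0$: every edge of $\PPP$ carries weight exactly $a$ and every edge of $\cE_\Lambda \setminus \PPP$ carries weight exactly $b$. Then $T(\PPP) = (\l+2) a$. For an arbitrary self-avoiding path $\pi$ from $\ulm$ to $\vlm$ contained in $\Lambda$, writing $k$ for the number of its edges lying on $\PPP$ and $m$ for the number of its edges lying outside $\PPP$, one has $T(\pi) = ka + mb$. Since $\Lambda$ is essentially a two-row ladder of length $\l+1$, every such self-avoiding path visits columns monotonically in the $\epsilon_1$-direction, so its combinatorics are parameterized by the positions of its vertical crossings between the two rows. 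I would enumerate these possibilities and verify that $T(\pi) > T(\PPP)$ strictly for every $\pi \ne \PPP$. The sharp case is $\pi = \PP$: then $T(\PP) - T(\PPP) = \l b - (\l+2) a$, which is strictly positive by the choice of $\l$ made at \eqref{eq: on fixe l.}. Every other competitor either has a shorter top-row traversal (and must then use extra bottom-row and vertical edges of weight $b$) or an additional pair of vertical crossings; in all cases, each deviation from $\PPP$ replaces at least one $a$-edge with a $b$-edge, so strictness comes from $b > a$.

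Since $\Lambda$ is finite, there are only finitely many self-avoiding paths from $\ulm$ to $\vlm$, and the strict inequalities above combine into a uniform gap
\[
\epsilon := \min_{\pi \ne \PPP} \bigl[\, T(\pi) - T(\PPP) \,\bigr] > 0,
\]
the minimum being taken over self-avoiding paths in $\Lambda$ from $\ulm$ to $\vlm$ at the $\delta = 0$ weights. For a general family $(t_e)_{e \in \cE_\Lambda} \in G(\delta)$, each edge weight differs from its $\delta = 0$ value by at most $\delta$, so the passage time of any path $\pi$ differs by at most $|\pi|_e \cdot \delta$. Every self-avoiding path contained in $\Lambda$ has at most $2(\l+1) - 1 = 2\l + 1$ edges, hence choosing $\delta < \epsilon / (2(2\l + 1))$ (and also small enough to keep all weights nonnegative, so that optimal paths in $\Lambda$ can be taken self-avoiding) guarantees $T(\PPP) < T(\pi)$ for every self-avoiding $\pi \ne \PPP$, and therefore for every path in $\Lambda$ from $\ulm$ to $\vlm$. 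This yields $(t_e) \in H$.

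The main obstacle is the combinatorial step at $\delta = 0$: one needs to rule out, on the ladder $\Lambda$, any self-avoiding path from $\ulm$ to $\vlm$ other than $\PPP$ achieving passage time at most $(\l+2) a$. The hypothesis $\l > 2a/(b-a)$ is consumed exactly once, for the comparison with $\PP$; all remaining cases follow from $b > a$ together with the observation that each deviation from $\PPP$ in the ladder swaps at least one $a$-edge for a $b$-edge. Once this uniform gap has been secured, the perturbation step is a routine continuity argument on a finite family of paths.
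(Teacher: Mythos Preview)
Your proposal is correct and follows essentially the same approach as the paper: first establish $G(0)\subset H$ by comparing $T(\PPP)$ to every other path in the ladder, then pass to small $\delta>0$ by continuity over the finite family of competing paths. The only differences are in execution: the paper handles the $\delta=0$ step by splitting $T(\pi)=T^1(\pi)+T^2(\pi)$ into horizontal and vertical contributions and showing $T^1(\pi)\ge T^1(\PPP)$ and $T^2(\pi)>T^2(\PPP)$ for $\pi\ne\PP,\PPP$, which avoids your case enumeration over crossing positions; and for the perturbation the paper simply notes that $H$ is open, whereas you write out an explicit Lipschitz bound.
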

			
			The proof of the claim is given below and, for now, we assume the claim to be true.
			
			Using Claim \ref{c: claim 1.}, fix $\delta>0$ such that $G(\delta) \subset H$.
			Define the event $\cA^\Lambda=\{(T(e))_{e\in\cE_\Lambda} \in G(\delta)\}$.
			Then $\mathfrak{P}=(\Lambda,\ulm,\vlm,\cA^\Lambda)$ is a valid pattern in the terminology of \cite{Jacquet2} (see Definition 1.2 in \cite{Jacquet2}). Indeed, $a \ne \infty$ and $b \ne \infty$, and since $a$ and $b$ belong to the support of $\L$, we have that $\P((T(e))_{e\in\cE_\Lambda} \in G(\delta))>0$.
			
			\vspace{\baselineskip}
			
			For every $x \in \Z^d$, if $(0,x) \in \mathfrak{C}$, we denote by $\gamma_-(x)$ the first geodesic in the lexicographical order between $0$ and $x$ among those whose number of edges is equal to $\Lu(0,x)$.
			We denote by $N^\mathfrak{P}(\gamma_-(x))$ the number of patterns $\mathfrak{P}$ visited by $\gamma_-(x)$. 
			Since the distribution $\L$ is useful and $\L([0,\infty))>p_c$, we can apply Theorem 1.5 in \cite{Jacquet2} to get constants $\delta,\beta_1,\beta_2>0$ such that
			\begin{equation}
				\P \left((0,x) \in \mathfrak{C} \text{ and } N^\mathfrak{P}(\gamma_-(x)) \le \delta \|x\|_1 \right) \le \beta_1 \mathrm{e}^{-\beta_2 \|x\|_1}.\label{eq: utilisation du théorème des motifs.}
			\end{equation} 
		
			\begin{claim}\label{c: claim 2.}
				For all vertices $z_1$ and $z_2$ and every path $\pi$ between $z_1$ and $z_2$, we have 
				\[|\pi|_e \ge \| z_2 - z_1 \|_1 + N^\mathfrak{P}(\pi).\]
			\end{claim}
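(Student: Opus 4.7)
My plan is to establish the slightly stronger bound $|\pi|_e \geq \|z_2-z_1\|_1 + 2 N^\mathfrak{P}(\pi)$, which immediately implies the claim. The underlying idea is that each occurrence of $\mathfrak{P}$ visited by $\pi$ costs two extra edges: at each such visit, $\pi$ traverses an isometric image of the path $\PPP$, which uses $\ell+2$ edges to join two vertices at $\ell_1$-distance $\ell$ (namely, the images of $\ulm$ and $\vlm$ under the corresponding lattice isometry), rather than traversing the direct route of $\ell$ edges.

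The argument itself is a splicing construction. Set $k = N^\mathfrak{P}(\pi)$. Using that, by the conventions of the pattern formalism of \cite{Jacquet2}, the $k$ visited occurrences are edge-disjoint and each forms a contiguous subwalk of $\pi$ between prescribed entry and exit vertices, I replace at each visit the image of $\PPP$ by the image of $\PP$ under the same isometry; this latter path has only $\ell$ edges and joins the same two endpoints. The modifications can be made independently, and the result is a walk $\pi'$ from $z_1$ to $z_2$ with $|\pi'|_e = |\pi|_e - 2k$. Although $\pi'$ may fail to be self-avoiding, this is immaterial: any walk in $\Z^d$ between two vertices uses at least as many edges as their $\ell_1$-distance, since the $\ell_1$-norm coincides with the graph distance on the nearest-neighbour lattice. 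Combining, $|\pi|_e = |\pi'|_e + 2k \geq \|z_2-z_1\|_1 + 2k$, which gives the claim.

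I do not expect a substantive obstacle. The only point meriting care is that the splicing is well-defined, which reduces to the standard features of the pattern-counting formalism already invoked to apply Theorem 1.5 of \cite{Jacquet2}: edge-disjointness of the visits, contiguity of each visit inside $\pi$, and the identification of its two extremities. Once these are granted, the proof is a clean accounting of edges.
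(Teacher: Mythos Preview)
Your splicing argument is different from the paper's approach and, once the gap below is patched, arguably cleaner: the paper counts edges coordinate by coordinate and argues that each pattern crossing contributes an extra up--and--down pair of edges in one direction, whereas you physically excise the detour and appeal to the graph-distance lower bound on the resulting walk. Your route also yields the sharper constant $2N^\mathfrak{P}(\pi)$ with no extra work and sidesteps the bookkeeping of how the lattice isometry permutes coordinate directions.

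The one step that needs justification is your assertion that at each visit $\pi$ traverses an isometric image of $\PPP$. The pattern formalism only guarantees that, at a visit, the environment in the box satisfies $\cA^\Lambda$ and that $\pi$ enters at the image of $\ulm$, exits at the image of $\vlm$, and stays inside the box in between; for an \emph{arbitrary} path this subwalk could just as well be the image of $\PP$, in which case your replacement saves no edges and the inequality fails (take $\pi$ to be a straight $\epsilon_1$-segment passing through a box where the pattern event happens to hold). What forces the subwalk to equal the image of $\PPP$ is the inclusion $G(\delta)\subset H$ together with the hypothesis that $\pi$ is a \emph{geodesic}: under $H$ the path $\PPP$ is the unique $T$-optimal route in the box, so a geodesic crossing the box must follow it, or else swapping in $\PPP$ would strictly lower the total time. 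Hence your argument proves the inequality for geodesics rather than for all paths as the claim is literally stated; since the claim is only ever applied to the geodesic $\gamma_-(x)$, this is enough, but you should insert that one-line justification. (The paper's own proof of the claim relies on exactly the same implicit assumption when it asserts that each crossing produces two edges between the same pair of hyperplanes.)
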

		
			The proof of this claim, given below, is based on easy geometrical considerations. For now, assume the claim to be true.
		
			When $(0,x) \in \mathfrak{C}$, by Claim \ref{c: claim 2.}, $|\gamma_-(x)|_e \ge \|x\|_1 +  N^\mathfrak{P}(\gamma_-(x))$. Thus, we get
			\begin{align*}
				\P((0,x) \in \mathfrak{C} \text{ and } \Lu(0,x) \le (1+\delta) \|x \|_1) &= \P ((0,x) \in \mathfrak{C} \text{ and } |\gamma_-(x)|_e \le (1+\delta)\|x\|_1) \\
				& \le \P ((0,x) \in \mathfrak{C} \text{ and } N^\mathfrak{P}(\gamma_-(x)) \le \delta \|x\|_1) \\
				& \le \beta_1 \mathrm{e}^{-\beta_2 \|x\|_1} \text{ by \eqref{eq: utilisation du théorème des motifs.},}
			\end{align*}
			which concludes the proof.
			
			
			\vspace{\baselineskip}
			
			It remains to prove Claim \ref{c: claim 1.} and Claim \ref{c: claim 2.}.
			
			\begin{proof}[Proof of Claim \ref{c: claim 1.}]
				Let us first prove that $G(0) \subset H$. Consider a family $(t_e)_{e \in \cE_\Lambda} \in G(0)$. We have $T(\PPP) = (\l+2)a$ and $T(\PP) = \l b$. By \eqref{eq: on fixe l.}, \[T(\PPP) < T(\PP).\]
				The path $\PP$ is the only path entirely contained in $\Lambda$ which does not take edges in the direction $\epsilon_2$. Let $\pi$ be a path from $\ulm$ to $\vlm$ entirely contained in $\Lambda$ different from $\PP$ and $\PPP$. For a path $\pi'$, we denote by $T^1(\pi')$ (resp. $T^2(\pi')$) the sum of the passage times of the edges of $\pi'$ which are in the direction $\epsilon_1$ (resp. $\epsilon_2$). Since the paths entirely contained in $\Lambda$ only take edges in the directions $\epsilon_1$ and $\epsilon_2$, we have \[T(\PPP)=T^1(\PPP)+T^2(\PPP) \text{ and } T(\pi)=T^1(\pi)+T^2(\pi).\]
				Now, since $\pi$ is different from $\PPP$, $\pi$ takes some edge in the directio, $\epsilon_2$ different from $\{\ulm,\ulm+\epsilon_2\}$ and $\{\vlm,\vlm+\epsilon_2\}$. Furthermore, $\pi$ has to take an even number of edges in the direction $\epsilon_2$. Since the only edges in the direction $\epsilon_2$ whose passage time is equal to $a$ are $\{\ulm,\ulm+\epsilon_2\}$ and $\{\vlm,\vlm+\epsilon_2\}$, it gives 
				\begin{equation}
					T^2(\pi) > T^2(\PPP).\label{eq: équation 1 dans la preuve du lemme.}
				\end{equation}
				Then, $\pi$ also has to take at least $\l$ edges in the direction $\epsilon_1$. Thus,
				\begin{equation}
					T^1(\pi) \ge \l a = T^1(\PPP).\label{eq: équation 2 dans la preuve du lemme.}
				\end{equation}
				Combining \eqref{eq: équation 1 dans la preuve du lemme.} and \eqref{eq: équation 2 dans la preuve du lemme.} yields $T(\PPP) < T(\pi)$, which proves that $(t_e)_{e \in \cE_\Lambda} \in H$ and that $G(0) \subset H$. 
				
				\vspace{\baselineskip}
				
				Then, $H$ is an open set since for a family $(t_e)_{e \in \cE_\Lambda}$ to belong to $H$, it is required that the time of one path is strictly smaller than the time of every path of a finite family of paths. Hence, for $\delta > 0$ small enough, we have 
				\begin{equation}
					G(\delta) \subset H.\label{eq: équation 3 dans la preuve du lemme.}
				\end{equation}
			\end{proof}
			
			\begin{proof}[Proof of Claim \ref{c: claim 2.}]
				Consider two vertices $z_1$ and $z_2$ in $\Z^d$ and a path $\pi$ between $z_1$ and $z_2$. For every $j \in \{1,\dots,d\}$, denote by $z_1(j)$ and $z_2(j)$ the $j$-th coordinates of $z_1$ and $z_2$, and let $\chi_j=|z_1(j)-z_2(j)|$. The path $\pi$ has to take at least $\chi_j$ edges in the direction $\epsilon_j$ for every $j \in \{1,\dots,d\}$. This gives \[|\pi|_e \ge \sum_{j=1}^d \chi_j = \| z_2 - z_1 \|_1.\]
				More precisely, the $\chi_2$ edges in the direction $\epsilon_2$ the path $\pi$ has to take are $\chi_2$ edges linking the hyperplanes $H^2_\l=\{(i_1,\dots,i_d)\in\Z^d \, : \, i_2=\l\}$ and $H^2_{\l+1}=\{(i_1,\dots,i_d)\in\Z^d \, : \, i_2=\l+1\}$ for every $\l \in \{0,\dots,\chi_2-1\}$. But, each time $\pi$ crosses a pattern $\mathfrak{P}$, $\pi$ takes two edges linking the same hyperplanes $H^2_\l=\{(z_1,\dots,z_d)\in\Z^d \, : \, z_2=\l\}$ and $H^2_{\l+1}=\{(z_1,\dots,z_d)\in\Z^d \, : \, z_2=\l+1\}$ for some $\l \in \Z$. This yields 
				\begin{equation}
					|\pi|_e \ge \| z_2 - z_1 \|_1 + N^\mathfrak{P}(\pi).\label{eq: équation sur le nombre d'arêtes de la géodésique.}
				\end{equation}
			\end{proof}
	
		\item It remains to deal with the case when there exists $a'\in\R$ such that $\L(a') + \L(\infty)=1$ but this case is simpler than the others. We replace the pattern $\mathfrak{P}$ in the proof above by the pattern $\mathfrak{P}_\infty = (\Lambda_\infty,\ulm_\infty,\vlm_\infty,\cA^\Lambda_\infty)$ where $\displaystyle \Lambda_\infty=\{0,1\}^2 \times \prod_{i=3}^d \{0\}$, $\ulm_\infty=(0,\dots,0)$, $\vlm_\infty=(1,0,\dots,0)$ and $\cA^\Lambda_\infty$ is the event on which $T(e)=a'$ for every edge $e \in \Lambda_\infty$ different from $\{\ulm_\infty,\vlm_\infty\}$, and $T(\{\ulm_\infty,\vlm_\infty\})=\infty$. Then the proof follows the one of the case $\L(\infty)>0$ above but with this pattern.
		\end{itemize} 
	\end{proof}
	
	
	
	\paragraph*{Acknowledgements.}
		This work has been supported by the project ANR MISTIC (ANR-19-CE40-0005).
		I would like to thank Arjun Krishnan, Firas Rassoul-Agha and Timo Seppäläinen for pointing out that Theorem \ref{thm: Théorème principal.} follows also from their work, as it is explained in Remark \ref{rem: lien avec l'article de KRAS.}.
		I would also like to thank Olivier Durieu and Jean-Baptiste Gouéré for their many suggestions, which helped to improve this short paper.

	\bibliographystyle{plain}
	
\end{document}